\newcommand{\free}{$\mathbb{Z}^{2}*\mathbb{Z}$ }
\newcommand{\freen}{$\mathbb{Z}^{n}*\mathbb{Z}$ }
\title{Embeddings of right-angled Artin groups into higher dimensional Thompson groups}
\author{Motoko Kato}
\date{}
\begin{document}

\numberwithin{equation}{section}
\newtheorem{theorem}{Theorem}[section]
\newtheorem{proposition}[theorem]{Proposition}
\newtheorem{formula}{Formula}[section]
\newtheorem{lemma}[theorem]{Lemma}
\newtheorem{corollary}[theorem]{Corollary}
\newtheorem{remark}{Remark}[section]
\newtheorem{definition}{Definition}[section]

\maketitle

\begin{abstract}
  In this paper, we construct embeddings of right-angled Artin groups into higher dimensional Thompson groups.
In particular, we embed every right-angled Artin groups into $n$-dimensional Thompson groups, where $n$ is the number of complementary edges in the defining graph.
It follows that $\mathbb{Z}^{n}*\mathbb{Z}$ embeds into $nV$ for every $n\geq 1$.
\end{abstract}

\section{Intoduction}
The Thompson group $V$ is an infinite simple finitely presented group, which is described as a subgroup of the homeomorphism group of the Cantor set $C$.
Brin \cite{B} defined higher dimensional Thompson groups as generalizations of the Thompson group $V=1V$.
By definition, $n$-dimensional Thompson group $n_1V$ embeds into $n_2V$ when $n_1\leq n_2$.
Brin \cite{B} showed that $V$ and $2V$ are not isomorphic.
Bleak and Lanoue \cite{BL} showed $n_1V$ and $n_2V$ are isomorphic if and only if $n_1=n_2$.

In \cite{BS}, Bleak and Salazar-D\'\i az proved that \free does not embed in $V$.
Recently,  Corwin and Haymaker \cite{CH} determined which right-angled Artin groups embed into $V$. 
Using the nonembedding result of \cite{BS}, they showed that \free is the only obstruction for a right-angled Artin group to be embedded into $V$.
On the other hand, Belk, Bleak and Matucci \cite{BBM} proved that a right-angled Artin group embeds in $nV$ with sufficiently large $n$.
They took $n$ to be the sum of the number of vertices and the number of complementary edges in the defining graph.
They conjectured that a right-angled Artin group embeds into $(n-1)V$ if and only if the right-angled Artin group does not contain \freen.
Corwin \cite{C} constructed embeddings of \freen into $nV$ for every $n\geq 2$. It follows that every $nV$ with $n\geq2$ does not embed into $V$. 

In this paper, we give another construction of embeddings of right-angled Artin groups into higher-dimensional Thompson groups.
In particular, we may embed a right-angled Artin group into $nV$, where $n$ is the number of complementary edges in the defining graph. 
We may construct embeddings of \freen into $nV$ in this way.

The author would like to thank Takuya Sakasai and Tomohiko Ishida for helpful comments.
This work was supported by the Program for Leading Graduate 
Schools, MEXT, Japan.

%----------------------------------------------------------------------------------------------------
\section{Right-angled Artin groups}\label{RAAG}

Let $\Gamma$ be a finite graph with a vertex set $V(\Gamma)=\{v_i\}_{1\leq i\leq m}$ and an edge set $E(\Gamma)$.
The corresponding right-angled Artin group, denoted by $A_\Gamma$, is a group defined by the presentation
\begin{align*}
A_\Gamma=\langle g_1,\ldots, g_m\mid g_ig_j=g_jg_i \text{ for all }\{v_i, v_j\}\in E(\Gamma)\rangle.
\end{align*}
In the following, we let 
\begin{align*}
\bar{E}(\Gamma)=\{\{v_i, v_j\}\mid v_i\not=v_j\in V(\Gamma) \text{ are not connected by edges.}\}
\end{align*}
We call the elements of $\bar{E}(\Gamma)$ complementary edges.

We use the following theorem, known as the ping-pong lemma for the right-angled Artin groups.

\begin{theorem}[\cite{K}]\label{PP}
Let $A_\Gamma$ be a right-angled Artin group with generators $\{g_i\}_{1\leq i\leq m}$ acting on a set $X$.
Suppose that there exist subsets $S_i$ $(1\leq i\leq m)$ of $X$, with divisions  $S_i=S_i^+\coprod S_i^-$, satisfying the following conditions:
\begin{itemize}
\item[$(1)$] $g_i(S_i^+)\subset S_i^+$ and $g_i^{-1}(S_i^-)\subset S_i^-$ for all $i$. 
\item[$(2)$] If $g_i$ and $g_j$ commute, then $g_i(S_j)=S_j$.
\item[$(3)$] If $g_i$ and $g_j$ do not commute, then $g_i(S_j)\subset S_i^+$ and $g_i^{-1}(S_j)\subset S_i^-$.
\item[$(4)$] There exists $x\in X-\bigcup_{i=1}^mS_i$ such that $g_i(x)\in S_i^+$ and $g_i^{-1}(x)\in S_i^-$ for all $i$.
\end{itemize}
Then this action is faithful.
\end{theorem}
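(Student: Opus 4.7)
The plan is to show faithfulness by evaluating each nontrivial $w\in A_\Gamma$ at the distinguished point $x$ from condition~(4). Since $x\notin\bigcup_{i}S_i$, it suffices to prove $w(x)\in\bigcup_{i}S_i$ for every nontrivial $w$, represented by a reduced word.

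I would proceed by induction on the length $k$ of a reduced expression for $w$, strengthening the conclusion to the inductive claim: \emph{there exists an initial letter $g_j^\delta$ of $w$ (a letter with which some reduced expression of $w$ begins, possibly after commutation moves) such that $w(x)\in S_j^\delta$}. The base case $k=1$ is immediate from condition~(4). For the inductive step, factor $w=g_{i_1}^{\epsilon_1}w'$ with $w'$ reduced of length $k-1$, apply the inductive hypothesis to $w'$ to obtain an initial letter $g_j^\delta$ of $w'$ with $w'(x)\in S_j^\delta$, and analyze $w(x)=g_{i_1}^{\epsilon_1}(w'(x))$ by cases on the relationship between $g_{i_1}$ and $g_j$ in $\Gamma$:
\begin{itemize}
\item[(A)] If $j=i_1$, reducedness of $w$ forces $\delta=\epsilon_1$, and condition~(1) yields $w(x)\in S_{i_1}^{\epsilon_1}$.
\item[(B)] If $j\neq i_1$ and the two generators do not commute, condition~(3) yields $w(x)\in S_{i_1}^{\epsilon_1}$.
\item[(C)] If $j\neq i_1$ and they commute, rewrite $w=g_j^\delta(g_{i_1}^{\epsilon_1}w'')$ using $w'=g_j^\delta w''$, and re-enter the analysis on the shorter word $g_{i_1}^{\epsilon_1}w''$.
\end{itemize}

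The main obstacle is the commuting subcase~(C): condition~(2) only gives $g_{i_1}^{\epsilon_1}(S_j)=S_j$ as sets, with no guarantee that the partition $S_j=S_j^+\coprod S_j^-$ is preserved, so one cannot directly pin down the sign of the image. Resolving it requires iterating the commutation rearrangement, exploiting the combinatorial fact that the initial letters of a reduced word in a right-angled Artin group form a clique in the commutation subgraph of $\Gamma$; repeated rearrangement must eventually expose either a repeat of the leading generator (falling into~(A)) or a non-commuting first letter (falling into~(B)). Making this iteration precise -- checking at each step that the rearranged expression is still reduced and of the expected length, and certifying termination via the clique structure of the initial-letter set -- is the technical heart of the argument.
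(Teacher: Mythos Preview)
The paper does not supply its own proof of this theorem: it is quoted from Koberda~\cite{K} and used as a black box in the proof of Theorem~\ref{Main}. There is therefore nothing in the paper to compare your attempt against.

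That said, your outline is the standard ping-pong argument for right-angled Artin groups, and the identification of the commuting subcase~(C) as the crux is exactly right. One point deserves more care than your sketch gives it. When you ``re-enter the analysis on the shorter word $g_{i_1}^{\epsilon_1}w''$'', that word has length $k-1$, the \emph{same} length as $w'$; so the outer induction on $|w|$ does not advance. If the witness produced by the inductive hypothesis for $g_{i_1}^{\epsilon_1}w''$ happens to be $g_{i_1}^{\epsilon_1}$ itself, you are thrown back to the previous configuration with the roles of the two commuting letters swapped, and the iteration can cycle indefinitely between them. The clique structure of the initial-letter set tells you that all letters encountered pairwise commute, but it does not by itself rule out such a two-cycle.

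Two standard ways to close this gap: either strengthen hypothesis~(2) to $g_i(S_j^{\pm})=S_j^{\pm}$, which makes case~(C) immediate (and which in fact holds in the paper's application, since the $h_i$ fix every $D_j$-slice coordinatewise when $D_i\cap D_j=\emptyset$); or organise the induction around a normal form for reduced words in $A_\Gamma$ (blocks of pairwise-commuting letters, each letter pushed as far right as possible), so that a secondary combinatorial measure strictly decreases at each step. Your proposal would be complete once one of these is made explicit.
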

%----------------------------------------------------------------------------------------------------
\section{Embedding right-angled Artin groups into $nV$}

We use the following notations in \cite{B}.
We let $I$ be a half-open interval $[0,1)$.
An {\it $n$-dimensional rectangle} is an affine copy of $I^n$ in $I^n$, constructed by repeating ``dyadic divisions''.
An {\it $n$-dimensional pattern} is a finite set of $n$-dimensional rectangles, with pairwise disjoint, non-empty interiors and whose union is $I^n$.
A {\it numbered pattern} is a pattern with a one-to-one
correspondence to $\{0, 1, \ldots , r-1\}$ where $r$ is the number of rectangles in the pattern.

Let $P=\{P_i\}_{0\leq i\leq r-1}$ and $Q=\{Q_i\}_{0\leq i\leq r-1}$ be numbered patterns.
We define $v(P,Q)$ to be a map from $I^n$ to itself which takes each $P_i$ onto $Q_i$ affinely so as to preserve the orientation.
The {\it $n$-dimensional Thompson group} $nV$ is the set of partially affine, partially orientation preserving right-continuous bijections from $I^n$ to itself.

Using these notations, we give a construction of embeddings of right-angled Artin groups into higher dimensional Thompson groups.

\begin{theorem}\label{Main}
Let $\Gamma$ be a graph with the vertex set $V(\Gamma)=\{v_i\}_{1\leq i\leq m}$.
Suppose that there are nonempty subsets $\{D_i\}_{1\leq i\leq m}$ of $\{1,\ldots n\}$, 
such that $D_i\cap D_j=\emptyset$ if and only if $v_i$ and $v_j$ are connected by an edge.
Then the right-angled Artin group $A_\Gamma$ embeds into $nV$.
\end{theorem}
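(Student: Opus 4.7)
The plan is to apply the right-angled Artin group ping-pong lemma (Theorem~\ref{PP}). For each vertex $v_i$ I will exhibit an element $g_i \in nV$ and a subset $S_i = S_i^+ \sqcup S_i^- \subset I^n$, and then verify the four conditions $(1)$--$(4)$.

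I would build each $g_i$ as a product-like element of $nV$ acting non-trivially only on the coordinates in $D_i$. For each coordinate $l \in \{1,\ldots,n\}$, let $\mathcal V_l = \{v_i : l \in D_i\}$; by the hypothesis, the vertices in $\mathcal V_l$ are pairwise non-adjacent in $\Gamma$. For each pair $(i,l)$ with $l \in D_i$, choose a $1$-dimensional Thompson element $h_{i,l} \in V$, with the family $\{h_{i,l} : v_i \in \mathcal V_l\}$ freely generating a free subgroup of $V$ equipped with a standard free-group ping-pong on disjoint intervals $\tau_{i,l}^+, \tau_{i,l}^- \subset I$. The element $g_i$ acts on the $l$-th coordinate as $h_{i,l}$ for $l \in D_i$ and as the identity for $l \notin D_i$. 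Since actions on disjoint coordinates commute and $h_{i,l}, h_{j,l}$ do not commute for $i \neq j$ on any common coordinate $l \in D_i \cap D_j$, the relation $g_i g_j = g_j g_i$ holds exactly when $D_i \cap D_j = \emptyset$, matching the defining relations of $A_\Gamma$.

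The ping-pong sets will have the form $S_i^\pm = \prod_{l=1}^n T_{i,l}^\pm$, where the factor in coordinate $l$ is the 1-dimensional ping-pong interval $\tau_{i,l}^\pm$ for $l \in D_i$ and a ``neutral'' interval for $l \notin D_i$, the latter chosen to be preserved by every $h_{j,l}$ acting on coordinate $l$ (so that condition~$(2)$ can be satisfied). Conditions~$(1)$, $(2)$, and $(4)$ then reduce, coordinate by coordinate, to the standard $1$-dimensional attracting/repelling properties of $h_{i,l}$ on coordinates in $D_i$, the preservation of the neutral intervals by commuting generators, and a routine choice of a basepoint outside $\bigcup_i S_i$.

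The heart of the proof is verifying condition~$(3)$: for non-adjacent $v_i, v_j$, one must show $g_i(S_j) \subset S_i^+$ and $g_i^{-1}(S_j) \subset S_i^-$. The verification splits coordinate-by-coordinate into three cases. Coordinates $l \in D_i \cap D_j$ are handled directly by the free-group ping-pong among the $h_{\bullet,l}$'s. Coordinates $l \in D_j \setminus D_i$ are fixed by $g_i$, and the inclusion reduces to a containment between two intervals. The delicate case is $l \in D_i \setminus D_j$, in which $g_i$ acts nontrivially by $h_{i,l}$ while $S_j$ imposes only the neutral constraint on the $l$-th coordinate; this forces a careful simultaneous design of the neutral intervals together with the dynamics of $h_{i,l}$, so that $h_{i,l}$ maps the neutral factor into $\tau_{i,l}^+$ and $h_{i,l}^{-1}$ maps it into $\tau_{i,l}^-$. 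Once all four conditions are met, Theorem~\ref{PP} yields the embedding $A_\Gamma \hookrightarrow nV$.
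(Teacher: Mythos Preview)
Your plan has a genuine gap: the product construction $g_i=\prod_{l\in D_i}h_{i,l}$ together with product ping-pong sets $S_i^\pm=\prod_l T_{i,l}^\pm$ cannot satisfy conditions~$(2)$ and~$(3)$ simultaneously. The ``delicate'' case you flag is in fact fatal. Take $\Gamma=P_4$ (the path $v_1\!-\!v_2\!-\!v_3\!-\!v_4$) with $D_1=\{1,2\}$, $D_2=\{3\}$, $D_3=\{1\}$, $D_4=\{2,3\}$, and look at coordinate $l=1$. The vertices $v_2,v_4$ are both passive for this coordinate and are non-adjacent; applying your condition~$(3)$ check to the pair $(v_2,v_4)$ on coordinate~$1$ (the ``both passive'' case) forces the neutral factors $T_{2,1}^\pm$ and $T_{4,1}^\pm$ to all coincide, say equal to $N$. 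Now $v_1$ is adjacent to $v_2$, so condition~$(2)$ demands $g_1(S_2)=S_2$; projecting to coordinate~$1$ gives $h_{1,1}(N)=N$. But $v_1$ is non-adjacent to $v_4$, so condition~$(3)$ demands $g_1(S_4)\subset S_1^+$ and $g_1^{-1}(S_4)\subset S_1^-$; on coordinate~$1$ this says $h_{1,1}(N)\subset\tau_{1,1}^+$ and $h_{1,1}^{-1}(N)\subset\tau_{1,1}^-$, hence $N\subset\tau_{1,1}^+\cap\tau_{1,1}^-=\emptyset$. The same obstruction appears whenever some $v_i$ has both an adjacent neighbour and a non-adjacent $v_j$ with $D_i\not\subset D_j$, so no amount of tuning the one-dimensional data will rescue the scheme.

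The paper's proof uses the same ping-pong lemma but avoids this trap by taking $S_i$ to be a $D_i$-\emph{slice} (the factor on every coordinate $l\notin D_i$ is the full interval $I$), and by building $h_i\in nV$ that acts only on the $D_i$-coordinates yet is \emph{not} a product of one-dimensional maps: Lemma~\ref{D} produces an element with $h_i(I^n\setminus S_i^-)=S_i^+$. A product map cannot do this when $|D_i|\ge 2$, since $h_i(S_i^-)$ would then be a rectangle while $I^n\setminus S_i^+$ is not. With these choices condition~$(2)$ is automatic (a $D_i$-coordinate map preserves any $D_j$-slice when $D_i\cap D_j=\emptyset$), and condition~$(3)$ follows because non-adjacent $v_i,v_j$ have $S_i\cap S_j=\emptyset$, so $S_j\subset I^n\setminus S_i^-$ is sent into $S_i^+$ in one stroke, with no coordinate-by-coordinate bookkeeping needed.
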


For a nonempty subset $D$ of $\{1,\ldots, n\}$, a {\it $D$-slice} of $I^n$ is an $n$-dimensional rectangle $S=\prod_{d=1}^nI_d$,
where $d\in D$ if and only if $I_d$ is properly contained in $[0,1)$.

\begin{lemma}\label{D}
Let $D$ be a nonempty subset of \{1,\ldots, n\}.
For every $D$-slice $S$ of $I^n$ and every division $S=S^+\coprod S^-$ where $S^+$ and $S^-$ are again $D$-slices,
there is $h\in nV$ satisfying 
\begin{itemize}
\item[$(1)$] $h$ changes $d$-th coordinate of $I^n$ if and only if $d\in D$.
\item[$(2)$] $h(I^n-S^-)=S^+$ and $h^{-1}(I^n-S^+)=S^-$.
\end{itemize} 
\end{lemma}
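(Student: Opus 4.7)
The plan is to produce $h$ as the element $v(P,Q)\in nV$ associated to a pair of numbered patterns chosen so that every matched pair of rectangles shares identical intervals in each non-$D$ coordinate. Any such $v(P,Q)$ acts as the identity on every non-$D$ coordinate, which is the ``only if'' half of condition~(1); the ``if'' half will follow from condition~(2), since for each $d\in D$ the $d$-th projection of $S^+$ is a proper sub-interval of $[0,1)$, and any $x\in I^n-S^-$ whose $d$-th coordinate lies outside this sub-interval is forced to be moved in the $d$-th coordinate by $h$.

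To build the patterns, first decompose $I^n-S$ as a disjoint union of dyadic rectangles $R_1,\ldots,R_m$, each of the product form $R_i^D\times\prod_{d\notin D}[0,1)$; this is possible because $S$ is a $D$-slice, so the complement differs from $I^n$ only in the $D$-coordinates. Then subdivide $S^-$ (respectively $S^+$) into $m+1$ dyadic sub-rectangles $S^-_0,\ldots,S^-_m$ (respectively $\widetilde{S}^+_0,\ldots,\widetilde{S}^+_m$) by halving only along $D$-coordinates, so that every piece retains $[0,1)$ in each non-$D$ coordinate. Take
\[
P=\{S^+,\,R_1,\ldots,R_m,\,S^-_0,\ldots,S^-_m\},\qquad Q=\{\widetilde{S}^+_0,\ldots,\widetilde{S}^+_m,\,S^-,\,R_1,\ldots,R_m\},
\]
and define $h=v(P,Q)$ by the matching $S^+\mapsto\widetilde{S}^+_0$, $R_i\mapsto\widetilde{S}^+_i$, $S^-_0\mapsto S^-$, and $S^-_i\mapsto R_i$ for $i=1,\ldots,m$.

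Condition (2) is then immediate from the matching: the source pieces constituting $I^n-S^-=S^+\sqcup R_1\sqcup\cdots\sqcup R_m$ are sent onto $\widetilde{S}^+_0\sqcup\cdots\sqcup\widetilde{S}^+_m=S^+$, and the pieces making up $S^-=S^-_0\sqcup\cdots\sqcup S^-_m$ are sent onto $S^-\sqcup R_1\sqcup\cdots\sqcup R_m=I^n-S^+$. The only step requiring attention is the dyadic decomposition of $I^n-S$ into rectangles with the required product form across non-$D$ coordinates; this is a routine coordinate-by-coordinate subdivision inside the $D$-directions, extended trivially in the rest, so no real obstacle arises.
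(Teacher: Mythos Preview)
Your proposal is correct and is essentially the paper's own construction: both decompose $I^n-S$ into $m$ many $D$-slices, subdivide each of $S^+$ and $S^-$ into $m+1$ many $D$-slices, and then match pieces so that $I^n-S^-$ collapses into $S^+$ while $S^-$ expands onto $I^n-S^+$ (your $\widetilde{S}^+_0$ and $S^-_0$ play the roles of the paper's $S^{++}$ and $S^{--}$). Your added justification of condition~(1) is a welcome detail that the paper leaves implicit.
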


\begin{proof}
There is an $n$-dimensional pattern which contains $S$ as a rectangle and consists of $D$-slices.
We fix one of such pattern $P$, and consider $I^n-S$ as a disjoint union of $(|P|-1)$-many $D$-slices.

We divide $S^+$ into mutually disjoint $|P|$-many $D$-slices.
We choose one of those $D$-slices in $S^+$ and name it $S^{++}$.
We consider $S^+-S^{++}$ as a disjoint union of $(|P|-1)$-many $D$-slices.
Similarly, we choose a $D$-slices $S^{--}$ in $S^-$,
and consider $S^--S^{--}$ as a disjoint union of $(|P|-1)$-many $D$-slices.

We define $h\in nV$ as follows:
\begin{itemize}
\item[$0.$] $h$ maps $I^n-S$ to $S^+-S^{++}$.
\item[$1.$] $h$ maps $S^+$ to $S^{++}$.
\item[$2.$] $h$ maps $S^--S^{--}$ to $I^n-S$.
\item[$3.$] $h$ maps $S^{--}$ to $S^-$.
\end{itemize}
This $h$ satisfies conditions $(1)$ and $(2)$.
\end{proof}

We show the construction of the map $h$ in the following figure, 
in the case where $n=2$, $D=\{1,2\}$, $S=[0,1/2)\times[0,1/2)$ and $S^+=[0,1/4)\times[0,1/2)$.

\begin{center}
%WinTpicVersion4.32a
{\unitlength 0.1in%
\begin{picture}(32.0000,13.4700)(4.0000,-17.4700)%
% BOX 2 0 3 0 Black White  
% 2 400 400 1747 1747
% 
\special{pn 8}%
\special{pa 400 400}%
\special{pa 1747 400}%
\special{pa 1747 1747}%
\special{pa 400 1747}%
\special{pa 400 400}%
\special{pa 1747 400}%
\special{fp}%
% LINE 2 0 3 0 Black White  
% 4 1747 1074 400 1074 1074 400 1074 400
% 
\special{pn 8}%
\special{pa 1747 1074}%
\special{pa 400 1074}%
\special{fp}%
\special{pa 1074 400}%
\special{pa 1074 400}%
\special{fp}%
% LINE 2 0 3 0 Black White  
% 2 1074 400 1074 1747
% 
\special{pn 8}%
\special{pa 1074 400}%
\special{pa 1074 1747}%
\special{fp}%
% LINE 2 0 3 0 Black White  
% 2 737 1747 737 1074
% 
\special{pn 8}%
\special{pa 737 1747}%
\special{pa 737 1074}%
\special{fp}%
% LINE 2 0 3 0 Black White  
% 2 737 1411 1074 1411
% 
\special{pn 8}%
\special{pa 737 1411}%
\special{pa 1074 1411}%
\special{fp}%
% LINE 2 0 3 0 Black White  
% 2 905 1747 905 1074
% 
\special{pn 8}%
\special{pa 905 1747}%
\special{pa 905 1074}%
\special{fp}%
% STR 2 0 3 0 Black White  
% 4 737 653 737 737 5 0 0 0
% $0_1$
\put(7.3700,-7.3700){\makebox(0,0){$0_1$}}%
% STR 2 0 3 0 Black White  
% 4 1411 653 1411 737 5 0 0 0
% $0_2$
\put(14.1100,-7.3700){\makebox(0,0){$0_2$}}%
% STR 2 0 3 0 Black White  
% 4 1411 1326 1411 1411 5 0 0 0
% $0_3$
\put(14.1100,-14.1100){\makebox(0,0){$0_3$}}%
% STR 2 0 3 0 Black White  
% 4 989 1166 989 1251 5 0 0 0
% $2_2$
\put(9.8900,-12.5100){\makebox(0,0){$2_2$}}%
% STR 2 0 3 0 Black White  
% 4 821 1166 821 1251 5 0 0 0
% $2_1$
\put(8.2100,-12.5100){\makebox(0,0){$2_1$}}%
% STR 2 0 3 0 Black White  
% 4 989 1495 989 1579 5 0 0 0
% $2_3$
\put(9.8900,-15.7900){\makebox(0,0){$2_3$}}%
% STR 2 0 3 0 Black White  
% 4 821 1495 821 1579 5 0 0 0
% $3$
\put(8.2100,-15.7900){\makebox(0,0){$3$}}%
% STR 2 0 3 0 Black White  
% 4 484 1326 484 1411 2 0 0 0
% 
\put(4.8400,-14.1100){\makebox(0,0)[lb]{}}%
% STR 2 0 3 0 Black White  
% 4 568 1326 568 1411 5 0 0 0
% $1$
\put(5.6800,-14.1100){\makebox(0,0){$1$}}%
% BOX 2 0 3 0 Black White  
% 2 2253 400 3600 1747
% 
\special{pn 8}%
\special{pa 2253 400}%
\special{pa 3600 400}%
\special{pa 3600 1747}%
\special{pa 2253 1747}%
\special{pa 2253 400}%
\special{pa 3600 400}%
\special{fp}%
% LINE 2 0 3 0 Black White  
% 2 2926 1747 2926 400
% 
\special{pn 8}%
\special{pa 2926 1747}%
\special{pa 2926 400}%
\special{fp}%
% LINE 2 0 3 0 Black White  
% 2 2253 1074 3600 1074
% 
\special{pn 8}%
\special{pa 2253 1074}%
\special{pa 3600 1074}%
\special{fp}%
% LINE 2 0 3 0 Black White  
% 2 2589 1074 2589 1747
% 
\special{pn 8}%
\special{pa 2589 1074}%
\special{pa 2589 1747}%
\special{fp}%
% LINE 2 0 3 0 Black White  
% 2 2589 1411 2253 1411
% 
\special{pn 8}%
\special{pa 2589 1411}%
\special{pa 2253 1411}%
\special{fp}%
% LINE 2 0 3 0 Black White  
% 2 2421 1074 2421 1747
% 
\special{pn 8}%
\special{pa 2421 1074}%
\special{pa 2421 1747}%
\special{fp}%
% STR 2 0 3 0 Black White  
% 4 2328 1495 2328 1579 5 0 0 0
% $1$
\put(23.2800,-15.7900){\makebox(0,0){$1$}}%
% STR 2 0 3 0 Black White  
% 4 2497 1495 2497 1579 5 0 0 0
% $0_3$
\put(24.9700,-15.7900){\makebox(0,0){$0_3$}}%
% STR 2 0 3 0 Black White  
% 4 2497 1158 2497 1242 5 0 0 0
% $0_2$
\put(24.9700,-12.4200){\makebox(0,0){$0_2$}}%
% STR 2 0 3 0 Black White  
% 4 2328 1158 2328 1242 5 0 0 0
% $0_1$
\put(23.2800,-12.4200){\makebox(0,0){$0_1$}}%
% STR 2 0 3 0 Black White  
% 4 2328 653 2328 737 2 0 0 0
% 
\put(23.2800,-7.3700){\makebox(0,0)[lb]{}}%
% STR 2 0 3 0 Black White  
% 4 2589 653 2589 737 5 0 0 0
% $2_1$
\put(25.8900,-7.3700){\makebox(0,0){$2_1$}}%
% STR 2 0 3 0 Black White  
% 4 3263 653 3263 737 5 0 0 0
% $2_2$
\put(32.6300,-7.3700){\makebox(0,0){$2_2$}}%
% STR 2 0 3 0 Black White  
% 4 3263 1326 3263 1411 5 0 0 0
% $2_3$
\put(32.6300,-14.1100){\makebox(0,0){$2_3$}}%
% STR 2 0 3 0 Black White  
% 4 2758 1326 2758 1411 5 0 0 0
% $3$
\put(27.5800,-14.1100){\makebox(0,0){$3$}}%
% VECTOR 2 0 3 0 Black White  
% 2 1916 1074 2084 1074
% 
\special{pn 8}%
\special{pa 1916 1074}%
\special{pa 2084 1074}%
\special{fp}%
\special{sh 1}%
\special{pa 2084 1074}%
\special{pa 2017 1054}%
\special{pa 2031 1074}%
\special{pa 2017 1094}%
\special{pa 2084 1074}%
\special{fp}%
\end{picture}}%
 
\end{center}

\begin{remark}\label{Rmk}
We take $h\in nV$ as in Lemma \ref{D} with respect to a $D$-slice $S$ and some division $S=S^+\coprod S^-$.
Let $S'$ be a $D'$-slice with $D\cap D'=\emptyset$. 
We may observe that $h(S')=S'$, because $S'$ is determined only by $d'$-th coordinates for $d'\in D'$, 
which are unchanged by $h$.
\end{remark}

\begin{lemma}\label{D2}
For nonempty subsets $\{D_i\}_{1\leq i\leq m}$of $\{1,\ldots, n\}$, 
there is a set of $n$-dimensional rectangles $\{S_i\}_{1\leq i\leq m}$ satisfying
\begin{itemize}
\item[$(1)$] For every $i$, $S_i$ is a $D_i$-slice of $I^n$.
\item[$(2)$] $S_i\cap S_j=\emptyset$ if and only if $D_i\cap D_j\not= \emptyset$. 
\item[$(3)$] $\bigcup_{i=1}^mS_i\subsetneqq I^n$.
\end{itemize}
\end{lemma}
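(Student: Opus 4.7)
The plan is to realize each $S_i$ as a product $\prod_{d=1}^n I_d^{(i)}$ of dyadic subintervals, so that the geometry of the $S_i$'s is controlled axis by axis. The key observation is that two product rectangles fail to intersect exactly when they fail to intersect in some coordinate, and this can happen only in coordinates where both factors are proper subintervals of $[0,1)$, i.e.\ only in coordinates $d \in D_i \cap D_j$. Thus condition~(2) reduces to the combinatorial task of assigning, along each axis $d$, pairwise disjoint dyadic subintervals to all indices $i$ with $d \in D_i$.

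The construction is then as follows. For each $d \in \{1,\ldots,n\}$, set $N_d = \{i : d \in D_i\}$ and $m_d = |N_d|$. When $m_d \geq 1$, choose $k_d \geq 1$ with $2^{k_d} \geq m_d + 1$, subdivide $[0,1)$ into the $2^{k_d}$ equal dyadic intervals $J_{d,0},\ldots,J_{d,2^{k_d}-1}$, and fix an injection $\sigma_d \colon N_d \hookrightarrow \{0,\ldots,2^{k_d}-2\}$, deliberately leaving the last interval $J_{d,2^{k_d}-1}$ unused. Then define $I_d^{(i)} := J_{d,\sigma_d(i)}$ whenever $d \in D_i$ and $I_d^{(i)} := [0,1)$ otherwise, and put $S_i := \prod_{d=1}^n I_d^{(i)}$.

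Verifying the three conditions is routine. Condition~(1) holds because $k_d \geq 1$ makes each $J_{d,\sigma_d(i)}$ a proper subinterval of $[0,1)$. For condition~(2), if $D_i \cap D_j \neq \emptyset$ and $i \neq j$, pick any $d \in D_i \cap D_j$: injectivity of $\sigma_d$ gives $\sigma_d(i) \neq \sigma_d(j)$, and hence disjoint $d$-th projections; conversely, if $D_i \cap D_j = \emptyset$ then in every coordinate at least one of the two factors equals $[0,1)$, so every projection, and hence the product, is nonempty. For condition~(3), any point $x \in I^n$ with $x_d \in J_{d,2^{k_d}-1}$ whenever $m_d \geq 1$ lies in no $S_i$: given $i$, picking any $d \in D_i$ forces $\sigma_d(i) \neq 2^{k_d}-1$, whence $x_d \notin I_d^{(i)}$. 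There is no genuine obstacle here; the only care required is to reserve one spare dyadic cell in each active axis, which simultaneously secures proper containment in~(1) and produces the witness point for~(3).
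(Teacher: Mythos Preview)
Your proof is correct and follows essentially the same route as the paper: build each $S_i$ as a coordinate-wise product, putting pairwise disjoint dyadic subintervals in the coordinates from $D_i$ and full intervals elsewhere, with one spare interval reserved so that~(3) holds. The only difference is cosmetic: the paper fixes a \emph{single} dyadic division $I=\coprod_k J_k$ into at least $m+1$ pieces, uses it on every axis, and simply sets $I_d^{(i)}=J_i$ for $d\in D_i$; your per-axis divisions and injections $\sigma_d$ achieve the same effect with slightly more bookkeeping.
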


\begin{proof}
We fix a dyadic division $I=\coprod_k J_k$, where $k\geq m+1$.
We define $S_i=\prod_{d=1}^n I_d^i$ 
by setting $I_d^i=J_i$ when $d\in D_i$, and $I_d^i=I$ otherwise.

$(1)$ Such $S_i$ is a $D_i$-slice.

$(2)$ If $D_i\cap D_j\not=\emptyset$, then $S_i\cap S_j= \emptyset$ 
since $I_d^i\cap I_d^j=J_i\cap J_j=\emptyset$ for all $d\in D_i\cap D_j$.
The converse follows from the observation that a $D$-slice and a $D'$-slice always intersect when $D\cap D'=\emptyset$.

$(3)$ Since we took $J_k$ small enough, $\bigcup_{i=1}^m S_i$ is properly contained in $I^n$.

Therefore, $\{S_i\}_{1\leq i\leq m}$ satisfies conditions required in Lemma \ref{D2}.

\end{proof}

\begin{proof}[Proof of Theorem \ref{Main}]
Let $\Gamma$ be a finite graph with vertices $\{v_i\}_{1\leq i\leq m}$. Let $\{D_i\}_{1\leq i\leq m}$ be nonempty subsets of $\{1,\ldots, n\}$ such that $D_i\cap D_j=\emptyset$ if and only if $v_i$ and $v_j$ are connected by an edge.

According to Lemma \ref{D2}, we take $\{S_i\}_{1\leq i\leq m}\subset I^n$ with respect to $\{D_i\}_{1\leq i\leq m}$.
For every $i$, we fix $D_i$-slices $S_i^+$ and $S_i^-$ satisfying $S_i=S_i^+\coprod S_i^-$.
We define $h_i$ to be $h$ of Lemma \ref{D}, which is defined with respect to $S_i$, $S_i^+$ and $S_i^-$.

%Because of the condition $(1)$ of Lemma \ref{D}, if $v_i$ and $v_j$ are connected by an edge, then $h_i$ and $h_j$ commute.

We may define a homomorphism $\phi: A_\Gamma \to nV$ which maps each generator $g_i$, 
corresponding to the vertex $v_i$, to $h_i$.
This homeomorphism is well-defined, since $h_i$ and $h_j$ commute when $v_i$ and $v_j$ are connected by an edge, according to the first condition of Lemma \ref{D}.

We consider an action of $A_\Gamma$ on $I^n$, which is defined by $g\cdot x=\phi(g)(x)$.
In the following, we show that this action is faithful, and thus $\phi$ is injective.      

$(1)$ By the definition of $h_i$, 
$h_i(S_i^+)\subset S_i^+$ and $h_i^{-1}(S_i^-)\subset S_i^-$ for all $i$.

$(2)$ According to Remark \ref{Rmk}, $h_i(S_j)=S_j$ when $g_i$ and $g_j$ commute.

$(3)$ When $g_i$ and $g_j$ do not commute, $v_i$ and $v_j$ are not connected by an edge, and $S_i$ and $S_j$ are disjoint.
Therefore $h_i(S_j)\subset h_i(I^n-S_i)\subset S_i^+$ and $h_i^{-1}(S_j)\subset h_i^{-1}(I^n-S_i)\subset S_i^-$.

$(4)$ Since $\bigcup_{i=1}^mS_i\subsetneqq I^n$, there is $x_0\in I^n-S_i$ for all $i$.
Such $x_0$ satisfies $h_i(x_0)\in S_i^+$ and $h_i^{-1}(x_0)\in S_i^-$, for all $i$.

By Theorem \ref{PP}, $\phi$ is injective and an embedding of $A_\Gamma$ into $nV$.  
\end{proof}

\begin{corollary}
A right-angled Artin group $A_\Gamma$ embeds into $n$-dimensional Thompson groups, where $n$ is the number of complementary edges in $\Gamma$.
\end{corollary}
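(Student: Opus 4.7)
The plan is to reduce the corollary to Theorem \ref{Main} by exhibiting the required subsets $\{D_i\}$ directly from the combinatorics of $\bar E(\Gamma)$. Enumerate the complementary edges as $\bar E(\Gamma)=\{e_1,\ldots,e_n\}$ and, for each vertex $v_i$, set
$$D_i=\{\,k\in\{1,\ldots,n\}\mid v_i\in e_k\,\}\subseteq\{1,\ldots,n\}.$$
This is just the vertex--edge incidence relation for the complement graph, so the coordinates of $I^n$ are indexed by complementary edges, and each vertex remembers which complementary edges it is incident to.

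The only substantive check is the biconditional hypothesis of Theorem \ref{Main}. First I would argue the forward direction: if $v_i$ and $v_j$ are joined by an edge of $\Gamma$, then $\{v_i,v_j\}\notin\bar E(\Gamma)$, and since any complementary edge $e_k$ containing both $v_i$ and $v_j$ would necessarily equal $\{v_i,v_j\}$, no such $k$ exists, so $D_i\cap D_j=\emptyset$. Conversely, if $v_i$ and $v_j$ are not adjacent, then $\{v_i,v_j\}=e_k$ for some $k$, and this index lies in $D_i\cap D_j$. With the biconditional in hand, Theorem \ref{Main} immediately yields $A_\Gamma\hookrightarrow nV$.

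The main obstacle is the nonemptiness clause on the $D_i$, which fails precisely when some vertex $v_i$ is adjacent to every other vertex (a \emph{cone vertex}); such a $v_i$ is incident to no complementary edge. The plan to handle this is to observe that such a $g_i$ is central and produces a direct factor $A_\Gamma\cong A_{\Gamma\setminus v_i}\times\mathbb{Z}$, while $\Gamma\setminus v_i$ has the same complementary edges as $\Gamma$. One may therefore peel off all cone vertices, apply the main construction to the remaining graph, and absorb the commuting $\mathbb{Z}$ factors as dyadic transformations supported in the nonempty leftover region $I^n-\bigcup_iS_i$ supplied by Lemma \ref{D2}(3), which by construction is fixed pointwise by every $h_i$ and hence commutes with the image of $A_{\Gamma\setminus\{\text{cone vertices}\}}$. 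Up to this mild bookkeeping, the proof is essentially a one-line application of Theorem \ref{Main}.
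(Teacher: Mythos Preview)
Your proposal follows the paper's proof almost exactly: define $D_i$ as the set of indices of complementary edges incident to $v_i$, verify the biconditional hypothesis of Theorem~\ref{Main}, and peel off cone vertices as a direct $\mathbb{Z}^k$ factor before applying the theorem to the remaining subgraph with the same complementary-edge set. Up to this point the arguments are identical.

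The gap is in your mechanism for reattaching the $\mathbb{Z}$ factors. You assert that the leftover region $I^n-\bigcup_iS_i$ from Lemma~\ref{D2}(3) is fixed pointwise by every $h_i$, so that an element of $nV$ supported there commutes with the image of $A_{\Gamma'}$. This is false: by condition~(2) of Lemma~\ref{D}, $h_i(I^n-S_i^-)=S_i^+$, so $h_i$ carries every point of $I^n-S_i$ (and in particular every point of the leftover region) into $S_i^+\subset S_i$. Indeed, this is exactly why the ping-pong basepoint $x_0$ chosen in that region satisfies $h_i(x_0)\in S_i^+$ in step~(4) of the proof of Theorem~\ref{Main}; if the region were fixed pointwise, condition~(4) of Theorem~\ref{PP} could not be met.

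The paper avoids this issue by invoking instead the general fact that if $G$ and $H$ both embed in $nV$ then so does $G\times H$ (since $nV$ contains a copy of $nV\times nV$), applied to $G=\mathbb{Z}^{|V_0(\Gamma)|}$ and $H=A_{\Gamma'}$. Replacing your last sentence with this observation repairs the argument.
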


\begin{proof}
We may assume that every vertex of $\Gamma$ contributes to a complementary edge.
In fact, if we let 
\begin{align*}
V_0(\Gamma)=\{v\in V(\Gamma)\mid \text{ For all $v\not=v'\in V(\Gamma)$, $\{v, v'\}\in E(\Gamma)$.}\},
\end{align*}
then $A_\Gamma=\mathbb{Z}^{|V_0(\Gamma)|}\times A_{\Gamma'}$ for some subgraph $\Gamma'$ satisfying the assumption and $\bar{E}(\Gamma)=\bar{E}(\Gamma')$.
In general, if two groups $G$ and $H$ embed in $nV$, then $G\times H$ again embeds in $nV$.
Therefore, it is enough to consider whether $A_{\Gamma'}$ embeds into $nV$ or not.

Given $A_\Gamma$ satisfying our assumption, we let $V(\Gamma)=\{v_i\}_{1\leq i\leq m}$ be the vertex set and 
$\bar{E}(\Gamma)=\{\bar{e}_k\}_{1\leq k\leq n}$ be the set of complementary edges.
For every $i\in \{1,\ldots, m\}$, we let 
\begin{align*}
D_i=\{k\in\{1,\ldots, n\} \mid \bar{e}_k \text{ contains $v_i$ as an endpoint.}\}.
\end{align*}
We associate $D_i$ with $v_i$.

$\Gamma$ satisfies the condition required in Theorem \ref{Main}, with respect to the subsets $\{D_i\}_{1\leq i\leq m}$ of $\{1,\ldots, n\}$.
\end{proof}

 %----------------------------------------------------------------------------------------------------

\end{document}